\theoremstyle{plain}
\theoremstyle{definition}
\newtheorem{theorem}{Theorem}[section]
\newtheorem{lemma}[theorem]{Lemma}
\newtheorem{corollary}[theorem]{Corollary}
\newtheorem{remark}[theorem]{Remark}
\theoremstyle{remark}
\numberwithin{equation}{section}
\title{Uniformizable functional Alexandroff spaces}
\author[F. Ayatollah Zadeh Shirazi, E. Hakimi, A. Hosseini, R. Rezavand]{Fatemah Ayatollah Zadeh Shirazi, Elahe Hakimi \\ Arezoo Hosseini, Reza Rezavand}
\begin{document}
\begin{abstract}
In the following text we show that the Alexandroff space $X$ is uniformizable  if and only if 
the collection of all smallest neighbourhoods is a partition of $X$. Moreover the
Alexandroff space $X$ is uniformizable and functional Alexandroff ($k-$primal)  if and only if 
the collection of all smallest neighbourhoods  is a partition of $X$ into its finite subsets. 
\end{abstract}
\maketitle
\noindent {\small {\bf 2020 Mathematics Subject Classification:}  54C05, 54A05,  54E15  \\
{\bf Keywords:}}  Alexandroff space, Functional Alexandroff space, ($k-$)Primal space, Topological group, Uniformizable.
\section{Introduction}
\noindent Diskrete R\"aume~\cite{alexandroff}, $A-$space~\cite{cord}, 
Alexandroff space~\cite{aren}, etc., all of them denote topological spaces whose topology is closed under arbitrary nonempty intersections.
Amongst subcategories of the category of Alexandroff spaces 
are finite topological spaces (see e.g.~\cite{herda, stong}) that are most studied ones. Another subcategory of the category of Alexandroff spaces 
is the category of 
functional Alexandroff spaces which is introduced in~\cite{man-golestani} on the base of a talk~\cite{talk} which is called and introduced independently
as primal spaces in~\cite{echi}. Moreover $k-$primal spaces studied for the first time in~\cite{sami}, satisfy the following
diagram for a topological space $X$ (hence the category of $k-$primal spaces is an intermediate category):
\begin{center}
$X$ is functional Alexandroff $\Rightarrow$ $X$ is $k-$primal $\Rightarrow$ $X$ is Alexandroff 
\end{center}
In this text we prove that for uniformizable topological space $X$, all of the statements in the above diagram are equivalent.
In topological space $X$, for $a\in X$ and $f:X\to X$ let 
\begin{center}
$V(a):=\bigcap\{U:U$ is an open neighbourhood of $a\}$,
\\ $\:$ \\
$V_f(a):=\{f^{-n}(a):n\geq0\}$. \\ $\:$
\end{center}
So the topological space $X$ is an Alexandroff
space, if $V(a)$ is open for each $a\in X$, i.e. $\{V(a):a\in X\}$ is a basis for the topology of $X$.
\\
For self--map $f:X\to X$, the Alexandroff topology on $X$ generated by basis $\{V_f(a):a\in X\}$
is called the functional Alexandroff topology on $X$ (corresponding to $f$), in this case $V_f(a)=V(a)$ is the smallest open neighbourhood of $a$, for each $a\in X$. We say
the topological space $X$ is a functional Alexandroff space if there exists $f:X\to X$ such that functional Alexandroff topology on $X$ corresponding to $f$
is compatible with the original topology of $X$~\cite{man-golestani}.
\\
Moreover  the topological space $X$ is a $k-$primal space (for $k\geq1$) if there exist $f_1,\ldots,f_k:X\to X$
such that $V(a)=V_{f_1}(a)\cap\cdots\cap V_{f_k}(a)$ is the smallest open neighbourhood of $a$ for each $a\in X$. In particular
any functional Alexandroff space is a $1-$primal space and any $k-$primal space is a $k+1-$primal space~ \cite{sami}.
\subsection*{Background on uniform spaces} For arbitrary set $X$, the nonempty collection $\mathcal F$ of subsets of $X\times X$
is called a uniform structure on $X$ if for each $\alpha\in\mathcal{F}$ we have \cite{du}:
\begin{itemize}
\item $\Delta_X\subseteq\alpha$ (where $\Delta_X:=\{(x,x):x\in X\}$),
\item if $\alpha\subseteq\beta\subseteq X\times X$, then $\beta\in\mathcal{F}$,
\item $\alpha\cap\beta\in\mathcal{F}$, for each $\beta\in\mathcal{F}$,
\item $\alpha^{-1}=\{(y.x):(x,y)\in\alpha\}\in\mathcal{F}$, 
\item there exists $\beta\in\mathcal{F}$ such that $\beta\circ\beta=\{(x,z):\exists y\:\:((x,y)\in\beta\wedge(y,z)\in\beta)\}\subseteq\alpha$.
\end{itemize}
If $\mathcal F$ is a uniform structure on $X$, then 
$\tau_{\mathcal{F}}:=\{U\subseteq X:\forall x\in U\exists\alpha\in\mathcal{F}\:\alpha[x]\subseteq U\}$ is a topology on $X$
(where $\alpha[x]=\{y:(x,y)\in\alpha\}$ for $\alpha\in\mathcal{F},x\in X$). We call $\tau_{\mathcal F}$, the uniform topology
on $X$ generated by $\mathcal F$. We call $(X,\mathcal{F})$, a uniform space and equip it with topology $\tau_{\mathcal F}$.
\\
A topological space $(Y,\tau)$ is uniformizable if there exists a uniform structure $\mathcal H$ on $Y$ such that $\tau=\tau_{\mathcal H}$, in
this case we say $\mathcal H$ is a compatible uniform structure on topological space $Y$.
\begin{remark}
Topological space $X$ is uniformizable if and only if it is generated with a collection of pseudometrics on $X$, i.e.
there exists a nonempty collection $\{p_\theta:\theta\in T\}$ of pseudometrics on $X$ such that
$\{B^{p_\theta}(x,r):x\in X.r>9,\theta\in T\}$ is a sub--base of $X$ (where $B^p(x,r)=\{y\in X:p(x,y)<r\}$ for all $r>0,x\in X$
and pseudometric $p$ on $X$)~\cite{engel}. 
\end{remark}
\section{Main Theorems}
\noindent In this section we charachterize ``uniformizable Alexandroff spaces'' and ``uniformizable functional Alexandroff ($k-$primal) spaces''.
In particular, for nonempty set $X$, there exists a one--to--one correspondence between the family of all uniformizable Alexandroff  topologies on $X$ and
the family of all partitions of $X$. Also there exists a one--to--one correspondence between the family of all uniformizable functional Alexandroff  topologies on $X$ and
the family of all partitions of $X$ into its finite subsets.
\begin{lemma}\label{salam10}
In uniform topological space $X$, for $a\in X$ if $V(a)$ is open, then for each $b\in V(a)$, $V(a)$, too, is the smallest
open neighbourhood of $b$.
\end{lemma}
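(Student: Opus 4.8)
The plan is to work directly with a compatible uniform structure $\mathcal F$ on $X$ and to identify each smallest neighbourhood $V(x)$ with a class of the equivalence relation $E:=\bigcap\mathcal F\subseteq X\times X$. First I would record that $E$ is an equivalence relation on $X$: reflexivity is the axiom $\Delta_X\subseteq\alpha$; symmetry holds because $\mathcal F$ is closed under $\alpha\mapsto\alpha^{-1}$, so $E=E^{-1}$; and transitivity follows from the last uniformity axiom, since given $(x,y),(y,z)\in E$ and $\alpha\in\mathcal F$ one chooses $\beta\in\mathcal F$ with $\beta\circ\beta\subseteq\alpha$ and obtains $(x,z)\in\beta\circ\beta\subseteq\alpha$. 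Hence for each $x\in X$ the set $E[x]=\{y:(x,y)\in E\}$ is the $E$-class of $x$, and two such classes coincide as soon as they meet.

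Next I would prove that $V(x)=E[x]$ for every $x\in X$. The inclusion $E[x]\subseteq V(x)$ is immediate from the definition of $\tau_{\mathcal F}$: if $y\in E[x]$ and $U$ is an open neighbourhood of $x$, then $\alpha[x]\subseteq U$ for some $\alpha\in\mathcal F$, and $y\in\alpha[x]$ since $(x,y)\in E\subseteq\alpha$, so $y\in U$; thus $y$ lies in every open neighbourhood of $x$. For the reverse inclusion I would invoke the standard fact that $\{\alpha[x]:\alpha\in\mathcal F\}$ is a neighbourhood base at $x$ in $\tau_{\mathcal F}$, so that $y\in V(x)$ forces $y\in\alpha[x]$, that is $(x,y)\in\alpha$, for every $\alpha\in\mathcal F$, whence $(x,y)\in E$.

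With these two facts the conclusion is immediate: if $V(a)$ is open and $b\in V(a)=E[a]$, then $E[b]=E[a]$, hence $V(b)=V(a)$; this set is open by hypothesis and contains $b$, and since it is the intersection of all open neighbourhoods of $b$ yet itself open, $V(b)=V(a)$ is the smallest open neighbourhood of $b$.

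I expect the only point needing care to be the equality $V(x)=E[x]$, specifically the reverse inclusion, which rests on the elementary but not wholly trivial lemma that every $\alpha[x]$ is a $\tau_{\mathcal F}$-neighbourhood of $x$ (proved, as usual, by iterating the $\beta\circ\beta\subseteq\alpha$ axiom to exhibit the open set $\{z:\exists\gamma\in\mathcal F,\ \gamma[z]\subseteq\alpha[x]\}$ between $x$ and $\alpha[x]$); everything else is bookkeeping with the uniformity axioms. Alternatively one could run the same argument through the pseudometric description of the Remark, replacing $E$ by the relation ``$p_\theta(x,y)=0$ for all $\theta\in T$'', which is an equivalence relation by the triangle inequality and for which $V(x)$ is visibly the corresponding class.
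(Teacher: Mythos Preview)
Your argument is correct. The paper takes what you list as the alternative route: it fixes a generating family $\{p_\theta\}_{\theta\in\Gamma}$ of pseudometrics and, for the specific $a$ and $b\in V(a)$, observes that $V(a)\subseteq B^{p_\theta}(a,r)$ for every $\theta$ and $r>0$, so $p_\theta(a,b)=0$ for all $\theta$; the triangle inequality then places $a$ in every basic open set containing $b$, giving $V(a)\subseteq V$ for each open neighbourhood $V$ of $b$. Your primary approach via $E=\bigcap\mathcal F$ is more structural: you establish $V(x)=E[x]$ for \emph{every} point $x$, without yet using that any $V(x)$ is open, and then read the conclusion off the fact that $E$ is an equivalence relation. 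The price is the small lemma that each $\alpha[x]$ is a $\tau_{\mathcal F}$-neighbourhood of $x$, which the paper sidesteps by working with pseudometric balls (automatically open as sub-basic sets). Conceptually the two proofs encode the same observation---the indistinguishability relation of a uniform space is symmetric and transitive---so the difference is packaging rather than substance; your version has the mild advantage of isolating the general identity $V(x)=E[x]$, which is exactly what drives Lemma~\ref{salam22} and Theorem~\ref{salam30} later on.
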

\begin{proof}
Suppose that $a\in X$ has the smallest open neighbourhood $V(a)$ and let $b\in V(a)\setminus\{a\}$, 
it's clear that $V(a)$ is an open neighbourhood 
of $b$. Consider the uniform topology on $X$ generated by nonempty family $\{p_\alpha:\alpha\in\Gamma\}$ of pseudometrics
on $X$. For each $\theta\in\Gamma$ and $r>0$, $V(a)\cap B^{p_\theta}(a,r)$ is
an open neighbourhood of $a$, hence $b\in V(a)\subseteq V(a)\cap B^{p_\theta}(a,r)\subseteq B^{p_\theta}(a,r)$, thus
$p_\theta(a,b)<r$. 
Hence $p_\theta(a,b)=0$ for each $\theta\in\Gamma$. Thus
$a\in V$, for each open neighbourhood of $b$. Therefore $V(a)\subseteq V$ for each open neighbourhood
$V$ of $b$, which completes the proof.
\end{proof}
\begin{lemma}\label{salam20}
If $\mathcal P$ is a partition of $X$, and $\alpha_0:=\bigcup\{D\times D:D\in\mathcal{P}\}$, then
\begin{itemize}
\item[1.] $\mathcal{F}_{\mathcal P}:=\{\alpha\subseteq X\times X:\alpha_0\subseteq\alpha\}$ is a uniform structure on $X$, 
\item[2.] uniform space $(X,\mathcal{F}_{\mathcal P})$ is an Alexandroff topological space,
\item[3.] in uniform Alexandroff topological space $(X,\mathcal{F}_{\mathcal P})$, $\mathcal{P}=\{V(x):x\in X\}$.
\end{itemize}
\end{lemma}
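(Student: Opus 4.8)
The plan is to verify the three items essentially by unwinding definitions, the one genuinely useful observation being that the ``block relation'' $\alpha_0$ is symmetric and idempotent under composition because $\mathcal P$ is a partition. Throughout, for $x\in X$ I write $D_x$ for the unique block of $\mathcal P$ containing $x$.

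For item 1, I would first note that $\mathcal{F}_{\mathcal P}$ is nonempty (it contains $X\times X$, and also $\alpha_0$ itself) and that it is closed upward and under finite intersections straight from its definition, since $\alpha_0\subseteq\alpha$ and $\alpha_0\subseteq\beta$ give $\alpha_0\subseteq\alpha\cap\beta$. Next, $(x,x)\in D_x\times D_x\subseteq\alpha_0$ for every $x$, so $\Delta_X\subseteq\alpha_0\subseteq\alpha$ for every $\alpha\in\mathcal{F}_{\mathcal P}$. Because each $D\times D$ is symmetric, $\alpha_0^{-1}=\alpha_0$, whence $\alpha_0=\alpha_0^{-1}\subseteq\alpha^{-1}$ and $\alpha^{-1}\in\mathcal{F}_{\mathcal P}$. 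Finally, if $(x,y)\in D\times D$ and $(y,z)\in D'\times D'$ with $D,D'\in\mathcal P$, then $y\in D\cap D'$ forces $D=D'$, hence $(x,z)\in D\times D\subseteq\alpha_0$; combined with $\Delta_X\subseteq\alpha_0$ this yields $\alpha_0\circ\alpha_0=\alpha_0$, so $\beta:=\alpha_0\in\mathcal{F}_{\mathcal P}$ satisfies $\beta\circ\beta=\alpha_0\subseteq\alpha$. That verifies all five axioms.

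For items 2 and 3 I would handle them together by computing, for $x\in X$, the slice $\alpha_0[x]=\{y:(x,y)\in\alpha_0\}$ and observing that it equals $D_x$: indeed $(x,y)\in\alpha_0$ iff $x$ and $y$ lie in a common block iff $y\in D_x$. Since every $\alpha\in\mathcal{F}_{\mathcal P}$ contains $\alpha_0$, we get $\alpha[x]\supseteq D_x$ for all $\alpha\in\mathcal{F}_{\mathcal P}$. This gives two conclusions. First, $D_x\in\tau_{\mathcal{F}_{\mathcal P}}$: for any $y\in D_x$ we have $D_y=D_x$, so $\alpha_0[y]=D_y=D_x\subseteq D_x$, which is exactly the entourage condition. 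Second, if $U$ is any open neighbourhood of $x$, choose $\alpha\in\mathcal{F}_{\mathcal P}$ with $\alpha[x]\subseteq U$; then $D_x=\alpha_0[x]\subseteq\alpha[x]\subseteq U$. Hence $D_x$ is the smallest open neighbourhood of $x$, so $V(x)=D_x$ is open for every $x\in X$, i.e. $(X,\mathcal{F}_{\mathcal P})$ is Alexandroff (item 2); and then $\{V(x):x\in X\}=\{D_x:x\in X\}=\mathcal P$, because each block of $\mathcal P$ is $D_x$ for any of its members (item 3).

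I do not expect a real obstacle here; the only step demanding care is the composition axiom, where one must invoke disjointness of distinct blocks to conclude $\alpha_0\circ\alpha_0\subseteq\alpha_0$ rather than a strictly larger relation. Everything else is bookkeeping with the definition of $\tau_{\mathcal{F}_{\mathcal P}}$ and the elementary identity $\alpha_0[x]=D_x$.
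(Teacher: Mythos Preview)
Your proposal is correct and follows essentially the same route as the paper: both verify the five uniformity axioms directly (with the key step being $\alpha_0\circ\alpha_0\subseteq\alpha_0$ via disjointness of blocks), then compute $\alpha_0[x]=D_x$ and use $\alpha[x]\supseteq D_x$ for all $\alpha\in\mathcal F_{\mathcal P}$ to conclude that $D_x$ is open and is the smallest open neighbourhood of $x$. The only cosmetic difference is that you note $\alpha_0\circ\alpha_0=\alpha_0$ rather than merely $\subseteq$, which is harmless.
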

\begin{proof}
(1) Consider $\alpha,\beta\in\mathcal{F}_{\mathcal P}$ and $\gamma\subseteq X\times X$, then:
\begin{itemize}
\item $\Delta_X\subseteq\alpha_0\subseteq\alpha$,
\item $\alpha_0\subseteq\alpha$ and $\alpha_0\subseteq\beta$ implies $\alpha_0\subseteq\alpha\cap\beta$, thus
	$\alpha\cap\beta\in \mathcal{F}_{\mathcal P}$,
\item $\alpha_0\subseteq\alpha$ and $\alpha\subseteq\gamma$ implies $\alpha_0\subseteq\gamma$, thus 
	$\gamma\in \mathcal{F}_{\mathcal P}$,
\item $\alpha_0\subseteq\alpha$ implies $\alpha_0=\alpha_0^{-1}\subseteq\alpha^{-1}$, thus 
	$\alpha^{-1}\in \mathcal{F}_{\mathcal P}$,
\item if $(x,y),(y,z)\in\alpha_0$, then there exists $E,F\in\mathcal{P}$ such that $(x,y)\in E\times E$ and $(y,z)\in F\times F$.
	Thus $y\in E\cap F$, which leads to $E=F$ (since $E,F$ are elements of partition $\mathcal P$). Hence
	$(x,z)\in E\times F=E\times E\subseteq\alpha_0$. The above discussion shows $\alpha_0\circ\alpha_0\subseteq\alpha_0
	\subseteq\alpha$, in particular, there exists $(\alpha_0=)\theta\in \mathcal{F}_{\mathcal P}$ with
	$\theta\circ\theta\subseteq\alpha$.
\end{itemize} 
Hence $\mathcal{F}_{\mathcal P}$ is a uniform structure on $X$.
\\
(2, 3) For each $x\in X$, there exists unique $D_x\in{\mathcal P}$ such that $x\in D_x$. For each $y\in X$
and $\alpha\in \mathcal{F}_{\mathcal P}$ we have:
\[\alpha[y]\supseteq\alpha_0[y]=D_y\:.\]
Hence $U$ is an open subset of  $(X,\mathcal{F}_{\mathcal P})$ if and only if $D_y\subseteq U$ for each $y\in U$.
Let $x\in X$, for each $y\in D_x$ we have $D_x=D_y$ thus $D_x=\bigcup\{D_y:y\in D_x\}=\bigcup\{\alpha_0[y]:y\in D_x\}$ is an open subset of $(X,\mathcal{F}_{\mathcal P})$. On the other hand, if $U$ is an open neighbourhood of
$X$, then there exists $\alpha\in \mathcal{F}_{\mathcal P}$ such that $D_x=\alpha_0[x]\subseteq\alpha[x]\subseteq U$,
hence $D_x$ is the smallest open neighbourhood of $x$. In particular $(X,\mathcal{F}_{\mathcal P})$ is an 
Alexandroff space (since each point has the smallest open neighbourhood).
\end{proof}
\begin{lemma}\label{salam22}
If $X$ is a  uniformizable Alexandroff space, then
$\{V(a):a\in X\}$ is a partition of $X$ (into its open subsets).
\end{lemma}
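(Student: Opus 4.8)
The plan is to deduce this essentially at once from Lemma~\ref{salam10}. Since $X$ is uniformizable, fix a compatible uniform structure, so that $X$ may be regarded as a uniform topological space; because $X$ is Alexandroff, $V(a)$ is open for every $a\in X$, and the family $\{V(a):a\in X\}$ clearly covers $X$, as $a\in V(a)$ for each $a$. What remains is to verify that any two members of this family are either equal or disjoint, and that each member is nonempty (the latter being immediate from $a\in V(a)$).

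First I would record the precise consequence of Lemma~\ref{salam10}: for every $a\in X$ and every $b\in V(a)$, the set $V(a)$ is the smallest open neighbourhood of $b$, and hence $V(a)=V(b)$. Then, given $a,b\in X$ with $V(a)\cap V(b)\neq\emptyset$, choose a point $c$ in the intersection. Applying the previous observation to the pair $(a,c)$ yields $V(c)=V(a)$, and applying it to the pair $(b,c)$ yields $V(c)=V(b)$; therefore $V(a)=V(b)$. Contrapositively, two distinct members of $\{V(a):a\in X\}$ must be disjoint.

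Combining these facts — the family covers $X$, its members are pairwise equal-or-disjoint and nonempty, and each $V(a)$ is open since $X$ is Alexandroff — shows that $\{V(a):a\in X\}$ is a partition of $X$ into open subsets, which is the assertion. I do not expect a genuine obstacle here: the only point requiring attention is that Lemma~\ref{salam10} is stated for uniform(izable) spaces and uses the pseudometric description of the uniformity, but that description is supplied by the Remark and the uniformizability hypothesis is exactly what we are assuming.
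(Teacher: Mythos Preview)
Your proposal is correct and follows essentially the same route as the paper: the paper also notes that the family $\{V(a):a\in X\}$ covers $X$ and consists of nonempty open sets, and then, for $V(x)\cap V(y)\neq\varnothing$, picks $z$ in the intersection and invokes Lemma~\ref{salam10} to conclude $V(x)=V(z)=V(y)$. The only difference is that you spell out the intermediate consequence ``$b\in V(a)\Rightarrow V(b)=V(a)$'' explicitly before applying it, whereas the paper cites Lemma~\ref{salam10} directly at that point.
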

\begin{proof}
Suppose $X$ is a uniformizable Alexandroff  space. It's clear that $\{V(a):a\in X\}$ is a collection of nonempty subsets of $X$ with
	$\bigcup\{V(a):a\in X\}=X$ (since $x\in V(x)\subseteq X$ for each $x\in X$). For $x,y\in X$ with 
	$V(x)\cap V(y)\neq\varnothing$, choose $z\in V(x)\cap V(y)$, by Lemma~\ref{salam10} the equality $V(x)=V(z)=V(y)$ is valid.
\end{proof}
\begin{theorem}\label{salam30}
In topological space $X$ the following statements are equivalent:
\begin{itemize}
\item[1.] $X$ is a  uniformizable Alexandroff space,
\item[2.] $\{V(a):a\in X\}$ is a partition of $X$ into its open subsets,
\item[3.] There exists a partition $\mathcal P$ of $X$ such that  $\mathcal{F}_{\mathcal P}:=\{\alpha\subseteq X\times X:\bigcup\{D\times D:D\in{\mathcal P}\}\subseteq\alpha\}$ is a compatible uniform structure on $X$,
\item[4.] $X$ is an Alexandroff space and
\[\forall a\in X\:\:\forall b\in V(a)\:\:(V(b)=V(a))\:,\]
\item[5.] $X$ is an Alexandroff space and $\{(a,b)\in X\times X:V(b)\subseteq V(a)\}$ is an equivalence relation on $X$,
\item[6.] quotient space
$\frac{X}{\Re}$ is a discrete space, where $\Re:=\{(a,b)\in X\times X:V(b)=V(a)\}$ (consider natural quotient map
$\pi:X\to\frac{X}{\Re}$ ($\pi(x)=\frac{x}{\Re}$)).
\end{itemize}
\end{theorem}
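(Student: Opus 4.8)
The plan is to prove the cycle $(1)\Rightarrow(2)\Rightarrow(3)\Rightarrow(1)$, and then to attach $(4)$ and $(5)$ to $(2)$ and $(6)$ to $(4)$ by elementary arguments; the three lemmas above carry almost all of the weight on the uniform-space side. Note first that $(1)\Rightarrow(2)$ requires nothing new, being exactly Lemma~\ref{salam22}.

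For $(2)\Rightarrow(3)$ I would take $\mathcal P:=\{V(a):a\in X\}$, which is a partition of $X$ by hypothesis, and invoke Lemma~\ref{salam20}: $\mathcal F_{\mathcal P}$ is then a uniform structure on $X$, the space $(X,\mathcal F_{\mathcal P})$ is Alexandroff, and the smallest open neighbourhood of a point $x$ in $\tau_{\mathcal F_{\mathcal P}}$ is the block of $\mathcal P$ containing $x$, which (as $\mathcal P$ is a partition consisting of the sets $V(a)$) is $V(x)$. Since under $(2)$ each $V(a)$ is open in the original topology, $\{V(a):a\in X\}$ is also a basis for the original topology; two Alexandroff topologies on $X$ with the same collection of smallest neighbourhoods coincide, so $\tau_{\mathcal F_{\mathcal P}}$ is the original topology and $\mathcal F_{\mathcal P}$ is compatible. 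Finally $(3)\Rightarrow(1)$ is immediate: the existence of a compatible uniform structure makes $X$ uniformizable, and $(X,\mathcal F_{\mathcal P})$ is Alexandroff by Lemma~\ref{salam20}(2).

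Next I would dispatch $(2)\Leftrightarrow(4)$ and $(4)\Leftrightarrow(5)$, both of which are built on the observation that $X$ being Alexandroff (equivalently, all $V(a)$ open) is part of these statements. If $\{V(a):a\in X\}$ is a partition and $b\in V(a)$, then $b\in V(a)\cap V(b)\ne\varnothing$ forces $V(a)=V(b)$; conversely, if $V(b)=V(a)$ whenever $b\in V(a)$, then the nonempty open sets $V(a)$ cover $X$ and any $c\in V(a)\cap V(b)$ yields $V(a)=V(c)=V(b)$, so we have a partition. For $(4)\Leftrightarrow(5)$ I would first record that, $X$ being Alexandroff, $b\in V(a)\iff V(b)\subseteq V(a)$ (if $b\in V(a)$ then $V(a)$ is an open neighbourhood of $b$, so $V(b)\subseteq V(a)$; conversely $b\in V(b)\subseteq V(a)$). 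Hence $R:=\{(a,b):V(b)\subseteq V(a)\}=\{(a,b):b\in V(a)\}$ is automatically reflexive and transitive, so it is an equivalence relation precisely when it is symmetric; and symmetry, $b\in V(a)\Rightarrow a\in V(b)$, is equivalent to $(4)$ (from $(4)$, $b\in V(a)$ gives $a\in V(a)=V(b)$; from symmetry, $b\in V(a)$ gives both $V(b)\subseteq V(a)$ and $V(a)\subseteq V(b)$).

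For $(4)\Leftrightarrow(6)$ the one point requiring care is that $(6)$ does not explicitly posit that $X$ is Alexandroff, so this must be recovered; I expect this to be the only real obstacle, the rest being routine. The key remark is that $\pi^{-1}(\pi(x))=\{y\in X:V(y)=V(x)\}\subseteq V(x)$ always holds, since any such $y$ lies in $V(y)=V(x)$. If $X/\Re$ is discrete, then each $\pi^{-1}(\pi(x))$ is open, hence is an open neighbourhood of $x$ contained in $V(x)$, which forces $V(x)=\pi^{-1}(\pi(x))$; in particular $V(x)$ is open, so $X$ is Alexandroff, and the equality $V(x)=\{y:V(y)=V(x)\}$ is exactly the implication in $(4)$. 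Conversely, under $(4)$ one checks $\{y:V(y)=V(x)\}=V(x)$ (one inclusion is the displayed implication of $(4)$, the other is $y\in V(y)=V(x)$), so $\pi^{-1}(\pi(x))=V(x)$ is open for every $x$; hence every singleton of $X/\Re$ is open and $X/\Re$ is discrete.
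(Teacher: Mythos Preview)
Your proof is correct and follows essentially the same route as the paper: the equivalence of (1), (2), (3) via Lemmas~\ref{salam20} and~\ref{salam22}; the elementary equivalence of (2), (4), (5); and the link to (6) through the identity $\pi^{-1}(\pi(x))=V(x)$, including the recovery of the Alexandroff property from (6). Your treatment of $(2)\Rightarrow(3)$ is in fact a bit more explicit than the paper's, since you spell out why $\tau_{\mathcal F_{\mathcal P}}$ coincides with the original topology, and your handling of $(4)\Leftrightarrow(5)$ via the observation that the specialization preorder is automatically reflexive and transitive is a clean touch.
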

\begin{proof}  (1), (2) and (3) are equivalent by Lemmas~\ref{salam20} and~\ref{salam22}. (2), (4) and (5) are clearly equivalent using the difinition 
of a partition of $X$.
\\
($2\Rightarrow6$): Suppose $\{V(a):a\in X\}$ is a partition of $X$ into its open subsets, then for each $a\in X$, $\pi^{-1}(\frac{a}{\Re})=V(a)$
is an open subset of $X$. Thus $\{\frac{a}{\Re}\}$ is an open subset of $\frac{X}{\Re}$ for each $a\in X$, i.e.
$\frac{X}{\Re}$ is a discrete space.
\\
($6\Rightarrow4$): Suppose $\frac{X}{\Re}$ is a discrete space and let $a\in X$. $\pi^{-1}(\frac{a}{\Re})$
is an open neighbourhood of $a$ (since $\{\frac{a}{\Re}\}$ is an open subset of $\frac{X}{\Re}$), thus 
$V(a)\subseteq \pi^{-1}(\frac{a}{\Re})$. On the other hand, $\pi^{-1}(\frac{a}{\Re})=\{b\in X:V(a)=V(b)\}\subseteq V(a)$
(since for each $b\in X$, $b\in V(b)$). Hence 
\[\pi^{-1}(\frac{a}{\Re})=V(a)\tag{*}\]
is open and $a$ has the smallest open neighbourhood. Therefore $X$ is an Alexandroff space. 
\\
Moreover for each $a\in X$ and $b\in V(a)$, we have (use ($*$)):
\begin{eqnarray*}
b\in V(a) & \Rightarrow & b\in \pi^{-1}(\frac{a}{\Re}) \\
& \Rightarrow & b\in \frac{a}{\Re} \\
& \Rightarrow & \frac{b}{\Re}=\frac{a}{\Re}  \\
& \Rightarrow & V(b)=\pi^{-1}(\frac{b}{\Re})=\pi^{-1}(\frac{a}{\Re})=V(a)
\end{eqnarray*}
which leads to (4).
\end{proof}
\begin{remark}\label{salam40}
Alexandroff space $X$ is functional Alexandroff if and only if the following conditions hold \cite[Theorem~3.5]{man-golestani}:
\\ {\small
$(\mathcal{C}_1)$ $\forall x,y\in X\:\: (V(x)\cap V(y)=\varnothing\vee V(x)\subseteq V(y)\vee V(y)\subseteq V(x))$,
\\
$(\mathcal{C}_2)$ $\forall x\in X\:\:((\exists y\in X\:\:(V(x){\rm \: is \: a \: proper \: subset \: of \:}V(y)))\Rightarrow(\forall z\in X
	\setminus\{x\}\:\:V(z)\neq V(x)))$,
\\
$(\mathcal{C}_3)$ $\forall x,y\in X\:\:(\{z\in X:V(y)\subseteq V(z)\subseteq V(x)\}{\rm \: is \: finite})$.
}
\end{remark}
\begin{corollary}\label{salam50}
An Alexandroff uniformizable space is a functional Alexandroff uniformizable space if and only if
the smallest open neighbourhood of each point is finite.
\end{corollary}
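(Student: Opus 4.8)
The plan is to derive the corollary from Theorem~\ref{salam30} together with the characterization of functional Alexandroff spaces recalled in Remark~\ref{salam40}. First I would invoke Theorem~\ref{salam30}: since $X$ is a uniformizable Alexandroff space, $\mathcal P:=\{V(a):a\in X\}$ is a partition of $X$ into open subsets. The feature to extract is the resulting dichotomy: for $x,y\in X$ either $V(x)=V(y)$ or $V(x)\cap V(y)=\varnothing$. In particular $V(x)\subseteq V(y)$ forces $V(x)=V(y)$, which is equivalent to $x\in V(y)$ (using $x\in V(x)$ and Lemma~\ref{salam10}), and no $V(x)$ is ever a \emph{proper} subset of any $V(y)$.

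Next I would test the three conditions of Remark~\ref{salam40} against this dichotomy. Condition $(\mathcal C_1)$ is immediate, since $V(x)\cap V(y)\neq\varnothing$ already gives $V(x)=V(y)$. Condition $(\mathcal C_2)$ holds vacuously: its hypothesis asks for some $y$ with $V(x)$ a proper subset of $V(y)$, which never happens in a partition. Hence everything reduces to $(\mathcal C_3)$, and here the key observation is that for $x,y\in X$ the set $\{z\in X:V(y)\subseteq V(z)\subseteq V(x)\}$ is empty when $V(x)\neq V(y)$ and equals $\{z\in X:V(z)=V(x)\}=V(x)$ when $V(x)=V(y)$ (taking in particular $y=x$). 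Therefore $(\mathcal C_3)$ holds if and only if every block $V(x)$ of $\mathcal P$ is finite.

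Combining, Remark~\ref{salam40} tells us that $X$ is functional Alexandroff iff $(\mathcal C_1)\wedge(\mathcal C_2)\wedge(\mathcal C_3)$ holds, which by the two previous paragraphs is equivalent to $(\mathcal C_3)$ alone, i.e. to the finiteness of $V(x)$ for every $x\in X$. As $X$ is already assumed uniformizable, ``functional Alexandroff'' and ``functional Alexandroff uniformizable'' mean the same thing here, so this completes the stated equivalence.

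I do not expect a genuine obstacle: once Theorem~\ref{salam30} and Remark~\ref{salam40} are available the argument is essentially bookkeeping, the only delicate point being the identification of $\{z:V(y)\subseteq V(z)\subseteq V(x)\}$ with $V(x)$ in the equal case, where one uses that inside a single block membership and equality of smallest neighbourhoods agree. If a self-contained argument for the ``if'' direction is preferred, one can instead exhibit a witness map directly: on each finite block $D=\{a_0,\dots,a_{m-1}\}$ set $f(a_j)=a_{(j+1)\bmod m}$, so that $V_f(a_j)=\{f^{-n}(a_j):n\geq0\}=D=V(a_j)$ and the functional Alexandroff topology of $f$ is exactly the given topology; for the ``only if'' direction one notes that in a uniformizable functional Alexandroff space every point of a block $V(a)$ must reach every other point of that block under iteration of the witness map, forcing $V(a)$ to be a single finite cycle. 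I would nonetheless present the Remark~\ref{salam40} route as the main proof.
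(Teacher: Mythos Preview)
Your proposal is correct and follows essentially the same approach as the paper: invoke Theorem~\ref{salam30} to obtain the partition $\{V(a):a\in X\}$, observe that conditions $(\mathcal{C}_1)$ and $(\mathcal{C}_2)$ of Remark~\ref{salam40} then hold automatically, and reduce $(\mathcal{C}_3)$ to the finiteness of each $V(x)$ via the identification $\{z:V(z)=V(x)\}=V(x)$. The alternative witness-map construction you sketch at the end is not used for this corollary in the paper but appears later (in Theorem~\ref{salam70}).
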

\begin{proof}
Suppose $X$ is a uniformizable Alexandroff space. By Theorem~\ref{salam30}, $\{V(a):a\in X\}$ is a collection of
disjoint sets, therefore $(\mathcal{C}_1)$ and $(\mathcal{C}_2)$ (in Remark~\ref{salam40}) are valid. Hence $X$ is a functional Alexandroff space
if and only if $(\mathcal{C}_3)$. Since $\{V(a):a\in X\}$ is a collection of disjoint sets and for each $a\in X$, $a\in V(a)$, so:
\[\forall x,y\in X\:\:(V(y)\subseteq V(x)\Leftrightarrow V(y)=V(x))\:,\]
Thus for uniformizable Alexandroff space $X$, $(\mathcal{C}_3)$ is converted to:
\[\forall x\in X\:\:(\{z\in X:V(z)= V(x)\}{\rm \: is \: finite})\]
and by (4) in Theorem~\ref{salam30}:
\[\forall x\in X\:\:(V(x){\rm \: is \: finite})\:,\]
which completes the proof.
\end{proof}
\begin{corollary}\label{salam60}
By Theorem~\ref{salam30}, in finite nonempty $n-$set $X$, the number of all uniformizable topologies 
(resp. Functional Alexandroff uniformizable topologies)
is equal to the number of partiotions of $X$, i.e. Bell number $B_n$, where $B_0:=B_1=1$ and $B_{k+1}=\mathop{\Sigma}\limits_{0\leq i\leq k}
\left(\begin{array}{c} k \\ i \end{array}\right) B_i$ ($k\geq1$).
\end{corollary}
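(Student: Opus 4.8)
The plan is to read off the result from Theorem~\ref{salam30} and Corollary~\ref{salam50}, after making one elementary observation and recalling the standard recurrence for Bell numbers. The first step is to note that when $X$ is finite every topology on $X$ is Alexandroff, since an arbitrary intersection of open sets is then already a finite intersection; consequently ``uniformizable topology on $X$'' and ``uniformizable Alexandroff topology on $X$'' name the same objects, and likewise ``functional Alexandroff uniformizable topology'' requires no separate Alexandroff hypothesis.

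Next I would make explicit the bijection contained in Theorem~\ref{salam30}. To a uniformizable topology $\tau$ on $X$ assign the family $\mathcal P_\tau:=\{V(a):a\in X\}$ of its smallest neighbourhoods, which is a partition of $X$ by the equivalence $(1)\Leftrightarrow(2)$ of Theorem~\ref{salam30}. Conversely, to a partition $\mathcal P$ of $X$ assign the topology $\tau_{\mathcal F_{\mathcal P}}$, which by Lemma~\ref{salam20} is Alexandroff and uniformizable (with $\mathcal F_{\mathcal P}$ a compatible uniform structure). These two assignments are mutually inverse: by part~(3) of Lemma~\ref{salam20} the smallest neighbourhoods of $\tau_{\mathcal F_{\mathcal P}}$ are exactly the members of $\mathcal P$, so $\mathcal P_{\tau_{\mathcal F_{\mathcal P}}}=\mathcal P$; and in the other direction, since $\{V(a):a\in X\}$ is a base for any Alexandroff topology, two such topologies with the same assignment $a\mapsto V(a)$ coincide, whence $\tau_{\mathcal F_{\mathcal P_\tau}}=\tau$. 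Therefore the uniformizable topologies on $X$ are in one--to--one correspondence with the partitions of $X$.

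For the functional Alexandroff count I would invoke Corollary~\ref{salam50}: a uniformizable Alexandroff space is functional Alexandroff precisely when the smallest open neighbourhood of each point is finite. Since $X$ is finite, every $V(a)\subseteq X$ is automatically finite, so \emph{every} uniformizable topology on the finite set $X$ is functional Alexandroff. Hence the family of functional Alexandroff uniformizable topologies on $X$ coincides with the family of all uniformizable topologies on $X$, and is again in bijection with the partitions of $X$; both counts are therefore equal to the number of partitions of an $n$-element set.

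It then remains to identify that number with the Bell number $B_n$ defined by $B_0=B_1=1$ and $B_{k+1}=\sum_{0\le i\le k}\binom{k}{i}B_i$. This is the classical argument: fix a point $x_0\in X$ with $|X|=k+1$; a partition of $X$ is determined by its block $B\ni x_0$, and choosing $B\setminus\{x_0\}$ of size $i$ can be done in $\binom{k}{i}$ ways, after which the remaining $k-i$ points may be partitioned in $B_{k-i}$ ways, so summing over $i$ and re-indexing through $\binom{k}{i}=\binom{k}{k-i}$ gives the stated recurrence, the base cases being immediate. The only point requiring genuine care is the bijection step of the second paragraph --- that distinct partitions give distinct topologies and that every uniformizable topology arises in this way --- but this becomes routine once one observes that the partition is recovered from the topology as its family of smallest neighbourhoods.
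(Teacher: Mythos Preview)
Your argument is correct and is essentially the intended one: the paper offers no separate proof for this corollary, treating it as immediate from Theorem~\ref{salam30}, and your write-up simply makes explicit the bijection $\tau\leftrightarrow\{V(a):a\in X\}$ together with the observation (via Corollary~\ref{salam50}) that on a finite $X$ every uniformizable topology is automatically functional Alexandroff. The added justification of the Bell-number recurrence is standard and fine to include.
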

\begin{lemma}\label{salam65}
Every $k-$primal uniformizable space is a functional Alexandroff space.
\end{lemma}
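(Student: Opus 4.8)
The plan is to show that a $k$-primal uniformizable space $X$ has the property that every $V(a)$ is finite, and then invoke Corollary~\ref{salam50}. Since $X$ is $k$-primal it is in particular Alexandroff, and since it is also uniformizable, Theorem~\ref{salam30} applies: $\{V(a):a\in X\}$ is a partition of $X$ into open sets, and by item (4) of that theorem, $b\in V(a)$ forces $V(b)=V(a)$. So the whole burden is to prove finiteness of the blocks $V(a)$.

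The key step is to exploit the defining functions. Write the $k$-primal structure as $V(a)=V_{f_1}(a)\cap\cdots\cap V_{f_k}(a)$, where $V_{f_i}(a)=\{f_i^{-n}(a):n\ge 0\}$. First I would argue that each $f_i$ restricts to a self-map of the block $D:=V(a)$: if $x\in D$ then $x\in V_{f_i}(x)$ (as $x=f_i^{-0}(x)$... more carefully, one checks $f_i(x)\in V_{f_i}(x)$ since $x=f_i^{-1}(f_i(x))$, hence $f_i(x)$ lies in the smallest neighbourhood of... ) — the clean statement is that $V_{f_i}(f_i(x))\ni x$, so the smallest neighbourhood of $f_i(x)$ meets $D$, and since the blocks partition $X$ we get $f_i(x)\in D$. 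Thus each $f_i|_D:D\to D$. Now fix $b\in D$; for every $c\in D$ we have $V(c)=V(b)\ni c$, and since $c\in V(b)\subseteq V_{f_i}(b)$ we get $c=f_i^{-m_i}(b)$ for some $m_i\ge 0$, i.e. $f_i^{m_i}(c)=b$. In particular, taking any $c\in D$ and applying this with $b$ replaced by $c$ itself, $c\in V(c)\subseteq V_{f_i}(c)$ is automatic, but the useful direction is: $b\in V(c)$ too (blocks are singletons in the quotient), so $b=f_i^{-n_i}(c)$, i.e. $f_i^{n_i}(c)=b$; combined with $f_i^{m_i}(b)=c$ we get $f_i^{m_i+n_i}(b)=b$, so $b$ is a periodic point of each $f_i|_D$.

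From here I would show $D$ is finite. Each element $c\in D$ satisfies $f_i^{m_i}(c)=b$ for some $m_i\ge 0$, so $c$ lies in the backward orbit $V_{f_i}(b)$; hence $D\subseteq V_{f_1}(b)\cap\cdots\cap V_{f_k}(b)=V(b)=D$, which is just consistency, not yet finiteness. The real input is that $b$ is periodic under $f_i|_D$ with some period $p_i$, so the $f_i|_D$-orbit structure on $D$ is "eventually into the cycle through $b$"; more precisely, let $p$ be the common period, a multiple of all $p_i$. The hard part will be bounding the lengths of the backward chains $f_i^{-n}(b)$ that stay inside $D$: I would argue that if $c\in D$ then each coordinate "height" $m_i$ with $f_i^{m_i}(c)=b$ may be taken $<p_i$ (reduce mod the period, using $f_i^{p_i}(b)=b$ and that $b\in D$ is periodic, so one can always shorten), whence $c$ is determined by the tuple $(m_1,\dots,m_k)$ together with... no — $c$ need not be determined by the tuple. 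The correct finiteness argument: $D=V(b)=\bigcap_i V_{f_i}(b)$ and each $V_{f_i}(b)\cap D$, as a set of backward iterates landing on the periodic point $b$ inside the finite-period structure, I claim is finite because the $f_i|_D$ together with periodicity of $b$ force $D$ to coincide with the finite set $\{f_1^{-a_1}\cdots\}$... Cleanest route: show directly that $\{z: V(z)=V(b)\}$ is finite by noting $f_1|_D$ is a surjection of $D$ onto itself (every $c\in D$ equals $f_1^{n_1}(\text{something in }D)$, indeed $c=f_1(f_1^{m_1-1}(c))$ when $m_1\ge1$, and when $m_1=0$, $c=b=f_1(f_1^{p_1-1}(b))$), hence $f_1|_D$ is a bijection, likewise each $f_i|_D$; then $V_{f_i}(b)\cap D=\{f_i^{-n}(b):n\ge0\}=\{(f_i|_D)^{-n}(b)\}$ which, for a bijection with $b$ periodic of period $p_i$, is exactly the finite cycle $\{b,(f_i|_D)^{-1}(b),\dots\}$ of size $p_i$. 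Therefore $D=\bigcap_i (V_{f_i}(b)\cap D)$ has size at most $\min_i p_i<\infty$. With $V(a)$ finite for all $a$, Corollary~\ref{salam50} gives that $X$ is functional Alexandroff, completing the proof. The main obstacle, as indicated, is the bijectivity claim for $f_i|_D$ — establishing surjectivity cleanly from the $k$-primal hypothesis plus the partition property — after which finiteness is immediate.
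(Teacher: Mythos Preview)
Your overall strategy --- reduce to finiteness of each block $D=V(a)$ and then invoke Corollary~\ref{salam50} --- matches the paper. You also correctly extract the crucial relation: for $b,c\in D$ one has $c\in V_{f_i}(b)$ and $b\in V_{f_i}(c)$, i.e.\ $f_i^{m}(c)=b$ and $f_i^{n}(b)=c$ for some $m,n\ge 0$, so $b$ is $f_i$-periodic. (Watch the notation: $b\in f_i^{-n}(c)$ means $f_i^{n}(b)=c$, not $f_i^{n}(c)=b$.) The genuine gap is the claim that $f_i$ restricts to a self-map $f_i|_D:D\to D$. Knowing $x\in V_{f_i}(f_i(x))$ only places $x$ in the $f_i$-minimal neighbourhood of $f_i(x)$, not in its $X$-minimal neighbourhood $V(f_i(x))=\bigcap_j V_{f_j}(f_i(x))$; so you cannot conclude $V(f_i(x))$ meets $D$. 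Concretely, take $X=\{a,b,c,d\}$ with the discrete topology and the permutations $f_1=(a\,b)(c\,d)$, $f_2=(a\,c)(b\,d)$: then $V_{f_1}(a)\cap V_{f_2}(a)=\{a,b\}\cap\{a,c\}=\{a\}$, so this is a $2$-primal uniformizable space with $D=V(a)=\{a\}$, yet $f_1(a)=b\notin D$. Your subsequent bijectivity/surjectivity argument for $f_i|_D$ therefore has no footing, and in any case the step ``surjective $\Rightarrow$ bijective'' would presuppose finiteness of $D$, which is what you are proving.

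The fix is immediate from what you already have: since $c=f_i^{n}(b)$ for some $n\ge 0$ and $b$ has $f_i$-period $p_i$, every $c\in D$ lies in the forward $f_i$-orbit $\{b,f_i(b),\ldots,f_i^{p_i-1}(b)\}$, so $|D|\le p_i<\infty$. The paper reaches the same conclusion by a slightly different route: it records the equality $V_{f_1}(b)=V_{f_1}(a)$ for all $b\in V(a)$ (equivalent to your two-sided orbit relation) and then appeals to condition~$(\mathcal{C}_3)$ of Remark~\ref{salam40} for the functional Alexandroff topology of $f_1$, which says $\{z:V_{f_1}(z)=V_{f_1}(a)\}$ is finite; hence $V(a)$, being a subset of this set, is finite.
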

\begin{proof}
Suppose $X$ is a $k-$primal uniformizable space, then $X$ is a uniformizable Alexandroff space and by Theorem~\ref{salam30},
$\{V(a):a\in X\}$ is a partition of $X$ into its minimal open subsets. $X$ is a $k-$prinal space, thus there are $f_1,\ldots,f_k:X\to X$ such that
\[\forall a\in X\:\:(V(a)=V_{f_1}(a)\cap\cdots V_{f_k}(a))\: .\]
For each $a\in X$ and $b\in V(a)$ we have $V(b)=V(a)$ (since $\{V(z):z\in X\}$ is a partition of $X$), also 
$b\in V(b)=V(a)=V_{f_1}(a)\cap\cdots V_{f_k}(a)\subseteq V_{f_1}(a)$ and
$a\in V(a)=V(b)=V_{f_1}(b)\cap\cdots V_{f_k}(b)\subseteq V_{f_1}(b)$. Since
$b\in  V_{f_1}(a)$ (resp. $a\in  V_{f_1}(b)$) we have $V_{f_1}(b)\subseteq V_{f_1}(a)$ (resp. $V_{f_1}(a)\subseteq V_{f_1}(b)$).
Thus $V_{f_1}(b)= V_{f_1}(a)$. So:
\[\forall a\in X\forall b\in V(a) \:\:(V_{f_1}(b)= V_{f_1}(a))\:.\tag{+}\]
In functional Alexsandroff topology on $X$ associated to $f_1$ by $(\mathcal{C}_3)$ in Remark~\ref{salam40} we have:
\[\forall x,y\in X\:\:(\{z\in X: V_{f_1}(y)\subseteq V_{f_1}(z)\subseteq V_{f_1}(x)\}{\rm \: is \: finite})\]
which leads to
\[\forall x\in X\:\:(\{z\in X: V_{f_1}(x)\subseteq V_{f_1}(z)\subseteq V_{f_1}(x)\}{\rm \: is \: finite})\]
i.e.,
\[\forall x\in X\:\:(\{z\in X: V_{f_1}(z)=V_{f_1}(x)\}{\rm \: is \: finite})\:. \tag{++}\]
Using $(+)$ and $(++)$, $V(a)$ is finite for each $a\in X$.
Therefore $\{V(a):a\in X\}$ is a partition of $X$ into its finite open subsets. By Corollary~\ref{salam50}, $X$ is a functional Alexandroff space.
\end{proof}
\begin{theorem}\label{salam70}
In topological space $X$ the following statements are equivalent:
\begin{itemize}
\item[1.] $X$ is a functional Alexandroff ($k-$primal) uniformizable space,
\item[2.] $X$ is an Alexandroff uniformizable space such that for each $x\in X$, $V(x)$ is finite,
\item[3.] $\mathcal{P}=\{V(a):a\in X\}$ is a partition of $X$ into its finite open subsets,
\item[4.] There exists a partition $\mathcal P$ of $X$ into its finite subsets such that  $\mathcal{F}_{\mathcal P}:=\{\alpha\subseteq X\times X:\bigcup\{D\times D:D\in{\mathcal P}\}\subseteq\alpha\}$ is a compatible uniform structure on $X$,
\item[5.] There exists self--map $f:X\to X$ such that $Per(f)=X$ and $X$ is a functional Alexandroff space 
	associated to $f$,
\end{itemize}
\end{theorem}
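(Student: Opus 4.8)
The plan is to obtain the equivalences among (1)--(4) by bookkeeping from the results already established and then to prove the genuinely new equivalence $(3)\Leftrightarrow(5)$ by a direct construction. For $(1)\Leftrightarrow(2)$: recall from the introduction that a functional Alexandroff space is $1$-primal, hence $k$-primal, hence Alexandroff, while Lemma~\ref{salam65} supplies the converse implication ``$k$-primal uniformizable $\Rightarrow$ functional Alexandroff''; since uniformizability is a property of the topology itself, the three descriptions packaged in (1) all amount to ``functional Alexandroff uniformizable'', and Corollary~\ref{salam50} identifies this with ``Alexandroff uniformizable and every $V(x)$ finite'', which is (2). For $(2)\Leftrightarrow(3)\Leftrightarrow(4)$: by Theorem~\ref{salam30} the properties ``$X$ is Alexandroff uniformizable'', ``$\{V(a):a\in X\}$ is a partition of $X$ into open subsets'' and ``there is a partition $\mathcal{P}$ of $X$ with $\mathcal{F}_{\mathcal{P}}$ compatible'' are equivalent, and in the last case $\mathcal{P}=\{V(a):a\in X\}$; adjoining the requirement that every block be finite to each of these three formulations yields (2), (3), (4) respectively. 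Thus the whole theorem reduces to proving $(3)\Leftrightarrow(5)$.

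$(3)\Rightarrow(5)$: assume $\mathcal{P}=\{V(a):a\in X\}$ is a partition of $X$ into finite open blocks. I would define $f:X\to X$ blockwise: for each $D\in\mathcal{P}$ fix an enumeration $D=\{d_0,d_1,\dots,d_{m-1}\}$ and set $f(d_i)=d_{(i+1)\bmod m}$, so that $f$ restricts to a single $m$-cycle on $D$. Then every point is periodic, i.e. $Per(f)=X$. Since $f$ carries each block bijectively onto itself, for $a\in D$ the iterated preimages $f^{-n}(a)$ all lie in $D$, and because $f|_{D}$ is a cyclic permutation of the finite set $D$ they exhaust $D$; hence $V_f(a)=D=V(a)$. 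Consequently the functional Alexandroff topology associated to $f$ has basis $\{V_f(a):a\in X\}=\mathcal{P}$, which is also a basis of the original topology, so $X$ is the functional Alexandroff space associated to $f$ and (5) holds.

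$(5)\Rightarrow(3)$: assume $f:X\to X$ satisfies $Per(f)=X$ and the original topology of $X$ coincides with the functional Alexandroff topology associated to $f$, so $V(x)=V_f(x)$ for every $x$. The first step is to observe that $f$ is a bijection: it is surjective because $x=f^{p}(x)$ for the period $p$ of $x$, and it is injective because $f(a)=f(b)=c$ forces $a=f^{p-1}(c)$ and $b=f^{q-1}(c)$ (with $p,q$ the periods of $a,b$), and since $c$ then has period dividing both $p$ and $q$ one gets $a=b$. With $f$ bijective, $V_f(x)=\{(f^{-1})^{n}(x):n\geq0\}$ is exactly the finite cycle through $x$, and distinct cycles are disjoint with union $X$; hence $\{V(x):x\in X\}=\{V_f(x):x\in X\}$ is a partition of $X$ into finite sets whose blocks are open by Theorem~\ref{salam30}, which is (3) (and, through $(3)\Rightarrow(1)$, also yields the uniformizability of $X$).

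The first paragraph is routine once Theorem~\ref{salam30}, Corollary~\ref{salam50} and Lemma~\ref{salam65} are in hand; the substance is the pair $(3)\Leftrightarrow(5)$. In $(3)\Rightarrow(5)$ the only point needing care is the verification that $V_f(a)$ is exactly the block $D$ and picks up nothing from outside it, both of which follow at once because $f$ preserves each block and acts on it as a single cycle. In $(5)\Rightarrow(3)$ the crux, and the step I expect to be the main obstacle, is that $Per(f)=X$ forces $f$ to be a bijection: without that, $V_f(x)$ need be neither finite nor disjoint from the other sets $V_f(y)$, and it is precisely bijectivity that turns $\{V_f(x):x\in X\}$ into a partition into finite sets.
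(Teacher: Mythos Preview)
Your proof is correct and mirrors the paper's: (1)--(4) are linked through Theorem~\ref{salam30}, Corollary~\ref{salam50} and Lemma~\ref{salam65}, and $(3)\Leftrightarrow(5)$ via the same blockwise cyclic-permutation construction together with the bijectivity of $f$ forced by $Per(f)=X$. One minor slip in $(5)\Rightarrow(3)$: you cite Theorem~\ref{salam30} for the openness of the blocks, but that theorem presupposes uniformizability, which is not yet in hand---the openness of each $V_f(x)$ is immediate from the definition of the functional Alexandroff topology, and only then does $(3)\Rightarrow(1)$ give uniformizability.
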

\begin{proof}
Note that each functional Alexandroff space is a $k-$primal space, so by Lemma~\ref{salam65}, a uniformizable space
is a $k-$primal space if and only if it is a functional Alexandroff space.
\\
(1), (2), (3) and (4) are equivalent by Theorem~\ref{salam30} and Corollary~\ref{salam50}.
\\
($5\Rightarrow3$): For self--map $f:X\to X$ suppose $Per(f)=X$. Thus $f:X\to X$ is bijective and for each $x\in X$,
$V(x)=\bigcup\{f^{-n}(x):n\geq0\}=\{x,f(x),\ldots,f^{per(x)}(x)=x\}$ is finite.
\\
($3\Rightarrow5$): Suppose $\mathcal{P}=\{V(a):a\in X\}$ is a partition of $X$ into its finite open subsets. 
For each $D\in\mathcal{P}$ with $n_D$ elements suppose $D=\{a^D_1,\ldots,a^D_{n_D}\}$ and define $f:X\to X$ 
by
\[f(x)=\left\{\begin{array}{lc}a^D_{i+1}, & x\in D\in\mathcal{P}, x= a^D_{i},i\neq n_D\: \\ \\
a^D_1, & x\in D\in\mathcal{P}, x= a^D_{n_D}\:.\end{array}\right.\]
Then $Per(f)=X$ and $\bigcup\{f^{-n}(x):n\geq0\}=\{x,f(x),\ldots,f^{per(x)}(x)=x\}=D$ for $x\in D\in\mathcal{P}$.
Thus ``Alexandroff topology on $X$ with $\mathcal P$ as the collection of its smallest open neighbourhoods'' and 
``functional Alexandroff topology on $X$ associated with $f$'' are compatible.
\end{proof}
\begin{theorem}\label{salam80}
Functional Alexandroff topology on $X$ corresponding to self--map $f:X\to X$ is uniformizable if and only if $Per(f)=X$.
\end{theorem}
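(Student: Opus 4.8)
The plan is to reduce this to the characterizations already established and then carry out the one genuinely new computation, namely the direction that does not follow formally from what precedes it.

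First I would dispose of the easy implication. If $Per(f)=X$, then every point of $X$ is periodic under $f$; in particular $f$ is a bijection and for each $x\in X$ the orbit computation gives $V_f(x)=\{f^{-n}(x):n\geq0\}=\{x,f(x),\ldots,f^{per(x)-1}(x)\}$, a finite set. Moreover when $Per(f)=X$ one checks that $\{V_f(x):x\in X\}$ is a partition of $X$ (two periodic orbits are either disjoint or equal), so by the equivalence (3)$\Leftrightarrow$(5) of Theorem~\ref{salam70} — or directly by Theorem~\ref{salam30} applied to the partition $\{V_f(x):x\in X\}$ into finite open sets, which realizes $\mathcal F_{\mathcal P}$ as a compatible uniform structure — the functional Alexandroff topology associated to $f$ is uniformizable.

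For the converse, suppose the functional Alexandroff topology $\tau_f$ on $X$ corresponding to $f$ is uniformizable. Then $X$ with this topology is a uniformizable Alexandroff space, so by Lemma~\ref{salam22} (equivalently Theorem~\ref{salam30}(2)) the family $\{V(a):a\in X\}=\{V_f(a):a\in X\}$ is a \emph{partition} of $X$; in particular the smallest neighbourhoods are pairwise disjoint. Now fix $a\in X$. Since $f(a)\in V_f(f(a))$ and $a\in V_f(a)$ with $f(a)\in V_f(a)$ as well (because $a=f^{-1}\!\cdot\!f(a)$ lies in no, wait — rather $f(a)\in V_f(a)$ precisely when... ) — the key point is: $V_f(a)$ always contains $a$, and applying $f$ shifts orbits, so one shows $f(a)\in V_f(a)$ by the disjointness forcing $V_f(f(a))$ and $V_f(a)$ to coincide, hence $a\in V_f(f(a))=\{f^{-n}(f(a)):n\geq0\}$, which means $a=f^{-m}(f(a))$, i.e.\ $f^{m+1}(a)=f(a)$ for some $m\geq0$. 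If $f$ is injective on $X$ this already gives $f^{m}(a)=a$, so $a$ is periodic. The injectivity of $f$ itself follows from the partition property: if $f(a)=f(b)$ then $V_f(a)$ and $V_f(b)$ both contain the common orbit tail, so they are not disjoint, hence equal, and then (by the same periodicity argument applied carefully) $a$ and $b$ must lie in the same finite cyclic orbit with $a=b$. Assembling these observations yields $Per(f)=X$.

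The main obstacle is the converse direction, and within it the most delicate step is extracting \emph{periodicity} — not merely finiteness of orbits — from the disjointness of the smallest neighbourhoods. Finiteness of each $V_f(a)$ is immediate once we know $X$ is a functional Alexandroff uniformizable space (Corollary~\ref{salam50} or Theorem~\ref{salam70}(2)); but a finite backward orbit does not by itself force $a$ into a cycle unless one also controls $f$ near $a$. The right way to organize this is to first prove $f$ is injective (using disjointness to rule out $f(a)=f(b)$ with $a\neq b$), conclude $f$ is a bijection of the finite set $V_f(a)$ onto itself, and then observe that a bijection of a finite set has every point periodic; since $a\in V_f(a)$ and $f(V_f(a))=V_f(a)$, the point $a$ is periodic. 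I would present that chain explicitly, as it is the crux of the theorem, and keep the rest brief by citing Theorem~\ref{salam70}.
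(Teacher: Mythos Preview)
Your forward direction is fine and matches the paper. In the converse you have the right first move --- invoke Theorem~\ref{salam30} to get that $\{V_f(x):x\in X\}$ is a partition --- but from there you circle around the key step without landing it, and the detour through injectivity is both unnecessary and not correctly justified.

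The paper's argument is a single clean observation that you almost write down and then abandon: for every $a$ one has $a\in f^{-1}(f(a))\subseteq V_f(f(a))$ \emph{trivially}, so $a\in V_f(a)\cap V_f(f(a))$ and the partition property forces $V_f(a)=V_f(f(a))$. From this you should read off the \emph{nontrivial} inclusion $f(a)\in V_f(f(a))=V_f(a)=\bigcup_{n\ge0}f^{-n}(a)$, hence $f^{n+1}(a)=a$ for some $n\ge0$, and $a$ is periodic. You instead try to first argue $f(a)\in V_f(a)$ in order to get the intersection (that is the conclusion, not the hypothesis), and once you have the equality you use the vacuous direction $a\in V_f(f(a))$, obtaining only $f^{m}(a)=f(a)$, which is why you then feel forced to establish injectivity separately.

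That injectivity step, as written, has a genuine gap. You claim that $f(a)=f(b)$ makes $V_f(a)$ and $V_f(b)$ share a ``common orbit tail'', but $V_f(\cdot)$ records \emph{backward} orbits, while $f(a)=f(b)$ merges \emph{forward} orbits; there is no direct reason the backward orbits of $a$ and $b$ meet. (What actually works is again the trivial remark above: $a\in V_f(f(a))$ and $b\in V_f(f(b))=V_f(f(a))$, so $V_f(a)=V_f(f(a))=V_f(b)$ --- but once you see this, the whole injectivity/bijection-on-a-finite-set programme becomes superfluous, since periodicity of $a$ follows immediately.) Drop the detour and present the three-line argument: $a\in V_f(a)\cap V_f(f(a))\Rightarrow V_f(a)=V_f(f(a))\Rightarrow f(a)\in V_f(a)\Rightarrow f^{n+1}(a)=a$.
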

\begin{proof}
For $f:X\to X$ with $Per(f)=X$, by Theorem~\ref{salam70}, functional  Alexandroff topology on $X$ corresponding to $f$ is uniformizable.
\\
Now consider $g:X\to X$ such that functional  Alexandroff topology on $X$ corresponding to $g$ is uniformizable. By Theorem~\ref{salam30},
$\{V(x):x\in X\}=\{\bigcup\{g^{-n}(x):n\geq0\}:x\in X\}$ is a partition of $X$. In particular for each $x\in X$,
$\bigcup\{g^{-n}(x):n\geq0\}=\bigcup\{g^{-n}(g(x)):n\geq0\}$ (since $x\in \bigcup\{g^{-n}(x):n\geq0\}\cap \bigcup\{g^{-n}(g(x)):n\geq0\}$)
hence $g(x)\in \bigcup\{g^{-n}(g(x)):n\geq0\}=\bigcup\{g^{-n}(x):n\geq0\}$ and there exists $n\geq0$ such that
$g(x)\in g^{-n}(x)$, i.e. $g^{n+1}(x)=x$, thus $x\in Per(g)$.
\end{proof}
\subsection*{(Functional) Alexandroff topological groups}
Amongst well--known uniform spaces are metric spaces and topological groups~\cite{engel}.
All metrizable Alexandroff spaces are just discrete spaces. In the sequel we see each Alexandroff topological group
(by Alexandroff topological group we mean a topological group which is an Alexandroff space)
is homeomorphic with a product of a discrete and a non-discrete space.
\begin{theorem}\label{zahra10}
In topological group $G$ the following statements are equivalent:
\begin{itemize}
\item[1.] $G$ is an Alexandroff space,
\item[2.] $V(e)$ is open,
\item[3.] there exist a non-discrete topological group $E$ and a discrete topological group $F$ such that $E\times F$ and $G$ are homeomorphic topological spaces,
\item[4.] there exist a non-discrete topological space $E$ and a discrete topological space $F$ such that $E\times F$ and $G$ are homeomorphic topological spaces.
\end{itemize}
\end{theorem}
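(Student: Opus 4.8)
The plan is to establish $(1)\Leftrightarrow(2)$ by homogeneity, to prove $(2)\Rightarrow(3)$ by producing the canonical decomposition of $G$ along the subgroup $V(e)$, to note that $(3)\Rightarrow(4)$ is formal, and to close the cycle with $(4)\Rightarrow(1)$. For $(1)\Leftrightarrow(2)$: for each $g\in G$ the left translation $L_g\colon x\mapsto gx$ is a self-homeomorphism of $G$ sending $e$ to $g$, and a homeomorphism carries the intersection of all open neighbourhoods of a point onto that of its image, so $V(g)=L_g(V(e))=gV(e)$ for every $g$; hence all the sets $V(g)$ are open precisely when $V(e)$ is. Since $G$ is an Alexandroff space exactly when every $V(g)$ is open, $(1)$ and $(2)$ are equivalent.

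Next I would analyse $N:=V(e)$ under hypothesis $(2)$. A topological group is uniformizable (recalled in this subsection), so $G$ is a uniformizable Alexandroff space and Lemma~\ref{salam22} gives that $\{V(a):a\in G\}=\{aN:a\in G\}$ is a partition of $G$ into open subsets. Because the left translates of the block $N$ (the one containing $e$) partition $G$, $N$ is a subgroup: for $n\in N$ we have $n\in nN\cap N$, hence $nN=N$, which gives closure under multiplication, and then $n^{-1}\in n^{-1}N=N$ gives closure under inversion; by $(2)$ it is open. It is moreover normal, since each conjugation $c_g\colon x\mapsto gxg^{-1}$ is a self-homeomorphism of $G$ fixing $e$, so $c_g(N)=c_g(V(e))=V(e)=N$. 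Finally $N$ carries the indiscrete topology: a nonempty relatively open $W\subseteq N$, translated by one of its elements inside $N$, contains $e$, and then, written as $U\cap N$ with $U$ open in $G$, it is an open neighbourhood of $e$ in $G$ since $N$ is open, forcing $W\supseteq V(e)=N$. In particular, provided $G$ is non-discrete we have $N\neq\{e\}$ (otherwise every $\{g\}=gN$ is open), so $N$ is a genuinely non-discrete topological group; I take the non-discreteness of $G$ as the standing hypothesis here, since for discrete $G$ statements $(3)$ and $(4)$ are unsatisfiable.

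For $(2)\Rightarrow(3)$: as $N$ is open, every coset $gN$ is clopen, so $G/N$ is discrete, and being a quotient by an open normal subgroup it is a topological group. Picking a representative $g_i$ in each coset, indexed by $G/N$ with its discrete topology, the bijection $\Phi\colon N\times(G/N)\to G$ with $\Phi(n,g_iN)=g_in$ restricts on each clopen slice $N\times\{g_iN\}$ to the homeomorphism $n\mapsto g_in$ onto the clopen coset $g_iN$; since these slices and these cosets partition $N\times(G/N)$ and $G$ respectively into clopen pieces, $\Phi$ is a homeomorphism. Thus $(3)$ holds with $E=N$ and $F=G/N$, and $(3)\Rightarrow(4)$ is immediate because a topological group is in particular a topological space.

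The step I expect to be the main obstacle is $(4)\Rightarrow(1)$. From $G\cong E\times F$ with $F$ discrete one cannot conclude anything from $E$ being merely non-discrete -- for instance $\mathbb{R}\cong\mathbb{R}\times\{\ast\}$ has a non-discrete factor over a discrete one yet is not Alexandroff -- so the argument must use that the factor $E$ built above is not just non-discrete but indiscrete, hence in particular Alexandroff; for Alexandroff $E$ and discrete $F$ the product $E\times F=\bigsqcup_{f\in F}E\times\{f\}$ is again Alexandroff (each $V_{E\times F}(x,f)=V_E(x)\times\{f\}$ is open), whence $G\cong E\times F$ is Alexandroff and $(1)$ holds. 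Accordingly the factor $E$ in $(3)$ and $(4)$ should be read as indiscrete -- exactly what the canonical decomposition supplies -- and making this reading precise, rather than any calculation, is where the real work lies.
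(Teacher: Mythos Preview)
Your argument follows the paper's proof almost exactly: the paper also proves $(1)\Leftrightarrow(2)$ via $V(g)=gV(e)$, establishes $(1)\Rightarrow(3)$ by showing $\{V(g):g\in G\}=\{gV(e):g\in G\}$ is a partition (invoking Theorem~\ref{salam30}), identifies $G/\Re$ with $G/V(e)$, and writes down the same coset-representative bijection $V(e)\times G/V(e)\to G$, checking bijectivity, continuity, and openness separately. Your version is in fact more complete in places --- you spell out why $V(e)$ is a subgroup, why it is normal (via conjugation being a self-homeomorphism fixing $e$), and why its subspace topology is indiscrete, all of which the paper asserts or uses implicitly without justification.

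Your diagnosis of $(4)\Rightarrow(1)$ is exactly right and matches what the paper silently does: in the paper's proof of that implication it is claimed that $E\times\{y\}$ is the smallest open neighbourhood of $(x,y)$, which holds only if $E$ carries the indiscrete topology. So ``non-discrete'' throughout the statement is intended to mean \emph{indiscrete} (anti-discrete), not ``not discrete''; your $\mathbb{R}\times\{\ast\}$ example shows the literal reading is false. One small inconsistency in your write-up: once ``non-discrete'' is read as \emph{indiscrete}, statements $(3)$ and $(4)$ are no longer unsatisfiable for discrete $G$ --- the trivial one-point group is indiscrete, so $E=\{e\}$, $F=G$ works --- and your construction already covers this case with $N=\{e\}$. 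You can therefore drop the standing hypothesis that $G$ be non-discrete.
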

\begin{proof} Clearly (1) implies (2), also (3) implies (4).
\\
($2\Rightarrow1$): If $V(e)$ is open, then for each $g\in G$, $V(g)=gV(e)$ is open, and by definition, $G$ is an Alexandroff space.
\\
($1\Rightarrow3$): Suppose $G$ is an Alexandroff topological group, then $G$ is a uniformizable Alexandroff space and by Theorem~\ref{salam30}, $\{V(g):g\in G\}$ is a partition of $G$ into its open subsets. For each $g\in G$, $V(g)=gV(e)$ has non-discrete topology, moreover
for each $g,h\in G$ we have:
\begin{eqnarray*}
V(g)= V(h ) & \Leftrightarrow & gV(e)=hV(e) \\
&\Leftrightarrow & h^{-1}g\in V(e)
\end{eqnarray*}
hence for $\Re=\{(g,h)\in G\times G:V(g)=V(h)\}$ we have $\frac{G}{\Re}=\frac{G}{V(e)}$. By Theorem~\ref{salam30},
$\frac{G}{V(e)}$ is a discrete topological group (note that $V(e)$ is a normal subgroup of $G$). 
For each $D\in \{V(g):g\in G\}=\frac{G}{V(e)}$, choose $g_D\in D$ (in particular $D=g_DV(e)$). We claim that
$\varphi:V(e)\times \frac{G}{V(e)}\to G$ with $\varphi(h,D)=g_Dh$ is a homeomorphism.
\\
$\bullet$ $\varphi:V(e)\times \frac{G}{V(e)}\to G$ is one--to--one: Consider $(h_1,D_1),(h_2,D_2)\in V(e)\times \frac{G}{V(e)}$ with $g_{D_1}h_1=g_{D_2}h_2$. Then
\[D_1= g_{D_1}V(e)\mathop{=}\limits^{h_1\in V(e)}g_{D_1}h_1V(e) 
\mathop{=}\limits^{g_{D_1}h_1=g_{D_2}h_2} g_{D_2}h_2V(e) \\
\mathop{=}\limits^{h_2\in V(e)} g_{D_2}V(e)=D_2\]
thus $D_1=D_2$, therefore $g_{D_1}=g_{D_2}$. Using $g_{D_1}=g_{D_2}$ and $g_{D_1}h_1=g_{D_2}h_2$ we have
$h_1=h_2$.
\\
$\bullet$  $\varphi:V(e)\times \frac{G}{V(e)}\to G$ is onto: note that
$\varphi(V(e)\times \frac{G}{V(e)})=\{g_Dh:h\in V(e),D\in \frac{G}{V(e)}\}=\bigcup\{g_D V(e):D\in \frac{G}{V(e)}\}=\bigcup\{D:D\in \frac{G}{V(e)}\}=\bigcup\frac{G}{V(e)}=G$.
\\
$\bullet$  $\varphi:V(e)\times \frac{G}{V(e)}\to G$ is continuous: Consider $(h,D)\in V(e)\times \frac{G}{V(e)}$ and
suppose $U$ is an open neighbourhood of $g_D h$ in $G$, thus $U\supseteq V(g_Dh)=g_DhV(e)=g_DV(e)=\varphi(V(e)\times\{D\})$. $V(e)\times\{D\}$ is an open neighbourhood of $(h,D)$
which leads to continuity of $\varphi:V(e)\times \frac{G}{V(e)}\to G$ in $(h,D)$.
\\
$\bullet$  $\varphi:V(e)\times \frac{G}{V(e)}\to G$ is an open map: the smallest open neighbourhood of each $(h,D)\in V(e)\times \frac{G}{V(e)}$ is $V(e)\times\{D\}$. Now suppose $W$ is an open subset of $V(e)\times \frac{G}{V(e)}$, then
\begin{eqnarray*}
\varphi(W) & = & \varphi(\bigcup\{V(e)\times\{D\}:D\in \frac{G}{V(e)}\wedge(\exists h\:\: (h,D)\in W)\}) \\
& = & \bigcup\{\varphi(V(e)\times\{D\}):D\in \frac{G}{V(e)}\wedge(\exists h\:\: (h,D)\in W)\} \\
& = & \bigcup\{g_DV(e):D\in \frac{G}{V(e)}\wedge(\exists h\:\: (h,D)\in W)\}
\end{eqnarray*}
hence $\varphi(W)$ is an open subset of $G$, since $gV(e)$ is open (for each $g\in G$) by openness of $V(e)$.
\\
($4\Rightarrow1$): Consider non-discrete space $E$, discrete space $F$ and $(x,y)\in E\times F$, then
$E\times\{y\}$ is the smallest open neighbourhood of $(x,y)$. Hence $E\times F$ is an Alexandroff space.
\end{proof}
\begin{theorem}\label{zahra20}
In topological group $G$ the following statements are equivalent:
\begin{itemize}
\item[1.] $G$ is a functional Alexandroff ($k-$primal) space,
\item[2.] $V(e)$ is open and finite,
\item[3.] there exist a non-discrete topological group $E$ and a discrete finite topological group $F$ such that $E\times F$ and $G$ are homeomorphic topological spaces,
\item[4.] there exist a non-discrete topological space $E$ and a discrete finite topological space $F$ such that $E\times F$ and $G$ are homeomorphic topological spaces.
\end{itemize}
\end{theorem}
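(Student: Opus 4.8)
\textbf{Proof proposal for Theorem~\ref{zahra20}.}

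The plan is to reduce everything to Theorem~\ref{zahra10} together with the finiteness characterization already available for functional Alexandroff spaces. First I would observe that a functional Alexandroff ($k$-primal) space is in particular an Alexandroff space, and since every topological group is uniformizable, Theorem~\ref{salam70} applies: $G$ is a functional Alexandroff ($k$-primal) uniformizable space if and only if $G$ is Alexandroff and $V(x)$ is finite for each $x\in X$. Because $G$ is a topological group, $V(g)=gV(e)$ for every $g\in G$, so all the sets $V(g)$ have the same cardinality as $V(e)$; hence ``$V(x)$ finite for all $x$'' collapses to the single condition ``$V(e)$ finite''. Combining this with the equivalence of (1) and (2) in Theorem~\ref{zahra10} (which says $G$ is Alexandroff iff $V(e)$ is open) gives the equivalence $(1)\Leftrightarrow(2)$ here: $G$ is functional Alexandroff ($k$-primal) iff $V(e)$ is open and finite.

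Next I would handle the decomposition statements $(2)\Leftrightarrow(3)\Leftrightarrow(4)$ by piggy-backing on the homeomorphism $\varphi:V(e)\times\frac{G}{V(e)}\to G$ constructed in the proof of Theorem~\ref{zahra10}. For $(2)\Rightarrow(3)$: assuming $V(e)$ open and finite, Theorem~\ref{zahra10} already produces a homeomorphism $G\cong E\times F$ with $E=V(e)$ non-discrete (as a nontrivial subgroup carrying the non-discrete trace topology — note $V(e)$ is non-discrete precisely because it is the smallest open neighbourhood of $e$ and, being the minimal open set, cannot be a singleton unless $G$ is discrete; if $G$ were discrete then $V(e)=\{e\}$ and we are in a degenerate case, which I would address by the remark that a discrete group is $E\times F$ with $E$ a fixed non-discrete group and $F$ the whole group, so one may always arrange $E$ non-discrete) and $F=\frac{G}{V(e)}$ a discrete topological group; the new input is simply that $F=\frac{G}{V(e)}$ is now \emph{finite}, since $|F|=|G|/|V(e)|$ need not be finite — wait, that is false in general, so here is where care is needed. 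Actually the quotient $\frac{G}{V(e)}$ need \emph{not} be finite when $V(e)$ is finite (e.g.\ an infinite discrete group). So the correct reading of (3) is that such a decomposition \emph{exists}, not that $\frac{G}{V(e)}$ itself is finite; I would instead take $E:=V(e)$ together with a non-discrete factor absorbed suitably, or more cleanly, note that $(3)$ and $(4)$ as stated are about \emph{existence} of a decomposition with the discrete factor finite. The honest route: $(2)\Rightarrow(3)$ fails as I first sketched, so I must re-examine — the genuine claim must be that $G$ decomposes with \emph{some} finite discrete factor, and the natural candidate is a single point, i.e.\ $F$ trivial, giving $G\cong E$ non-discrete; but then $(3)$ says nothing beyond ``$G$ is non-discrete'', which is implied by $V(e)$ being a non-singleton open set. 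Let me restructure.

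Re-approaching $(2)\Leftrightarrow(3)\Leftrightarrow(4)$ cleanly: the intended content is that $G$ is homeomorphic to a product of a non-discrete space and a \emph{finite} discrete space; since $V(e)$ open and finite makes $V(e)$ itself both the smallest open set and finite, I would argue that $V(e)$ with its subspace topology is a non-discrete \emph{finite} space (finite because $|V(e)|$ is finite, non-discrete because it is the minimal nonempty open set so no point is isolated unless $|V(e)|=1$, the discrete-$G$ case handled separately), and set $F=\frac{G}{V(e)}$, which we do \emph{not} claim finite — so this still does not match (3). The resolution, which I believe is the paper's actual intent and which I would verify against the $(1)\Rightarrow(3)$ pattern of Theorem~\ref{zahra10}, is to swap roles: take $E:=\frac{G}{V(e)}$ only when it is non-discrete, i.e.\ infinite, and $F:=V(e)$ finite discrete — but $V(e)$ is non-discrete, not discrete, contradiction again. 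Given these tensions, the main obstacle is pinning down which factor is finite; my plan is therefore: (i) prove $(1)\Leftrightarrow(2)$ rigorously as in paragraph one; (ii) for $(2)\Rightarrow(3)$ use $\varphi$ from Theorem~\ref{zahra10} with $E=V(e)$ (non-discrete group, via the degenerate-case remark) and $F=\frac{G}{V(e)}$, and \emph{additionally} observe that the finiteness of $V(e)$ forces $F=\frac{G}{V(e)}$ to inherit finiteness only when... — at which point I would consult whether the theorem perhaps intends $G$ itself finite modulo a non-discrete piece; (iii) $(3)\Rightarrow(4)$ is trivial; (iv) $(4)\Rightarrow(1)$: a product $E\times F$ with $F$ finite discrete is Alexandroff (smallest neighbourhood of $(x,y)$ is $U_x\times\{y\}$ where $U_x$ is minimal-open in $E$; but $E$ need not be Alexandroff, so this needs $E$ non-discrete to supply... no — this is the real obstacle) — hence $(4)\Rightarrow(1)$ genuinely requires that $E\times F\cong G$ \emph{as a group} forces $E$ Alexandroff, or one uses that $G$ being a topological group homeomorphic to such a product, with $F$ finite, localizes the Alexandroff property. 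I expect the crux of the whole proof to be exactly this $(4)\Rightarrow(1)$ step and the precise formulation of the finite factor; I would resolve it by showing $V(e)\cong E\times\{pt\}$ sits as an open subset and transferring minimality, leaning on Theorem~\ref{zahra10}'s machinery and Corollary~\ref{salam50} for the finiteness bookkeeping.
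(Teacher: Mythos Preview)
Your argument for $(1)\Leftrightarrow(2)$ is correct and is essentially the paper's: the paper derives $(1)\Rightarrow(2)$ from Theorem~\ref{salam70}, and for $(2)\Rightarrow(1)$ it observes $V(g)=gV(e)$ so every $V(g)$ is open with $|V(e)|$ elements, checks directly that $\{V(g):g\in G\}$ is a partition (if $k\in V(g)\cap V(h)$ then $V(k)\subseteq V(g)$ and $V(k)\subseteq V(h)$, hence equality by cardinality), and invokes Theorem~\ref{salam70}. Your route via uniformizability of topological groups and Theorem~\ref{salam70} reaches the same conclusion with the same ingredients.

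Your difficulties with $(2)\Leftrightarrow(3)\Leftrightarrow(4)$ are not a gap in your reasoning; they stem from a defect in the statement that you sensed but did not name. In this paper ``non-discrete'' means \emph{indiscrete}: this is forced by the proof of $(4)\Rightarrow(1)$ in Theorem~\ref{zahra10}, where $E\times\{y\}$ is declared the smallest open neighbourhood of $(x,y)$, which holds only when $E$ carries the indiscrete topology. With that reading, the construction of Theorem~\ref{zahra10} gives $E=V(e)$ (indiscrete) and $F=G/V(e)$ (discrete), so when $V(e)$ is finite it is $E$, not $F$, that becomes finite. As stated, $(2)\Rightarrow(3)$ is false: take $G=(\mathbb{Z}/2\mathbb{Z})\times\mathbb{Z}$ with the first factor indiscrete and the second discrete; then $V(e)=(\mathbb{Z}/2\mathbb{Z})\times\{0\}$ is open of size $2$, so $(2)$ holds, yet $G$ has infinitely many minimal open sets and hence cannot be homeomorphic to $E\times F$ with $E$ indiscrete and $F$ finite discrete (such a product has only $|F|$ minimal open sets). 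The adjective ``finite'' in items~3 and~4 should modify $E$, not $F$. The paper's own proof of these items is the single sentence ``Use a similar method described in Theorem~\ref{zahra10} to complete the proof'', which does not address the misplacement. Once items~3 and~4 are corrected to require $E$ finite indiscrete and $F$ discrete, the argument is immediate: $(2)\Rightarrow(3)$ via $\varphi$ of Theorem~\ref{zahra10} with $E=V(e)$ finite indiscrete and $F=G/V(e)$ discrete; $(3)\Rightarrow(4)$ trivially; and $(4)\Rightarrow(2)$ because in $E\times F$ the smallest open neighbourhood of each point is $E\times\{y\}$, which is open and finite. Your instinct that the assignment of ``finite'' could not be made to work was right; the resolution is to fix the statement, not your proof.
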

\begin{proof}
(1) implies (2) by Theorem~\ref{salam70}.
\\
($2\Rightarrow1$) If $V(e)$ is a finite open subset of $G$ with $n(\geq1)$ elements, then for each $g\in G$, $V(g)=gV(e)$ 
 a finite open subset of $G$ with $n(\geq1)$ elements too. For $g,h\in G$, if $V(g)\cap V(h)\neq\varnothing$, then choose $k\in V(g)\cap V(h)$.
 Thus $V(k)$ as the smallest open neighbourhood of $k$ is the subset of $V(g)\cap V(h)$. Since $V(g), V(h), V(k)$ have $n$ elements and
 $V(k)\subseteq V(g),V(k)\subseteq V(h)$, we have $V(g)=V(k)=V(h)$. By Theorem~\ref{salam70}, $G$ is a functional Alexandroff space.
\\
Use a similar method described in Theorem~\ref{zahra10} to complete the proof.
\end{proof}

\noindent \noindent {\small {\bf Fatemah Ayatollah Zadeh Shirazi}, Faculty
of Mathematics, Statistics and Computer Science, College of
Science, University of Tehran, Enghelab Ave., Tehran, Iran
\linebreak (f.a.z.shirazi@ut.ac.ir)}
\\
{\small {\bf Elaheh Hakimi}, Faculty of Mathematics, Statistics
and Computer Science, College of Science, University of Tehran,
Enghelab Ave., Tehran, Iran (elaheh.hakimi@gmail.com)}
\\
{\small {\bf Arezoo Hosseini},
Faculty of Mathematics, College of Science, Farhangian University, Pardis Nasibe--shahid sherafat, Enghelab Ave., Tehran, Iran
(a.hosseini@cfu.ac.ir)}
\\
{\small {\bf Reza Rezavand}, School of Mathematics, Statistics
and Computer Science, College of Science, University of Tehran,
Enghelab Ave., Tehran, Iran (rezavand@ut.ac.ir)}


\begin{thebibliography}{99}

\bibitem{alexandroff} P. S. Alexandroff,  Diskrete R\"aume, Mat. Sbornik (N. S.) 2, 501--518, (1937).

\bibitem{aren} F. G. Arenas, Alexandroff spaces, Acta Mathematica Universitatis Comenianae 68 (1), 17--25,
(1999).

\bibitem{talk} F. Ayatollah Zadeh Shirazi, Dynamical system associated with an onto map $\lambda:A\to A$,
a talk in ``International Conference on Topology and its Applications'', July 6--11, (2009),
Department of Mathematics, Hacettepe University, Ankara, Turkey.

\bibitem{man-golestani} F. Ayatollah Zadeh Shirazi, N. Golestani, \textit{Functional Alexandroff spaces}, Hacettepe Journal of Mathematics and Statistics, 40 (4), 515--522, (2011).

\bibitem{sami} A. Ben Amor, S. Lazaar, T. Richmond, H. Sabri, $k-$primal spaces, Topology Appl. 309, Article ID 107907, 14 p. (2022).

\bibitem{du} J. Dugundji, Topology,
Allyn and Bacon Series in Advanced Mathematics. Boston: Allyn and Bacon, Inc. XVI (1973).


\bibitem{echi} O. Echi, The category of flows of set and top, Topology and its Applications, 159 (9), 2357--2366, (2012).

\bibitem{engel} R. Engelking, General topology, Sigma Series in Pure Mathematics, 6. Berlin: Heldermann Verlag, (1989).

\bibitem{herda} H. H. Herda, R. C. Metzler, Closure and interior in finite topological spaces,
Colloq. Math. 15, 211--216, (1966).

\bibitem{cord} M. C. McCord, 
Singular homology groups and homotopy groups of finite topological spaces, Duke Math. J., 33, 465, 474, (1966).



\bibitem{stong} R. E. Stong, Finite topological spaces, Trans. Amer. Math. Soc., 123, 325--340, (1966).

\end{thebibliography}
\end{document}